\documentclass[a4paper]{article}
\usepackage{amsthm}

\usepackage{a4wide}

\usepackage{amssymb}
\usepackage{hyperref}

\usepackage{graphicx}

\newtheorem{thm}{\Large T\normalsize heorem}[section]

\newtheorem{lemma}[thm]{\Large L\normalsize emma}
\newtheorem{prop}[thm]{\Large P\normalsize roposition}
\newtheorem{defi}[thm]{\Large D\normalsize efinition}

\newcommand{\Hom}{\mathop{\to}}
\newtheorem{example}{Example}[section]

\newtheorem{corollary}[thm]{Corollary}

\begin{document}
\title{Universality of intervals of line graph order}
\author{Ji\v r\'\i{} Fiala\thanks{Supported by M\v{S}MT \v{C}R grant LH12095 and GA\v{C}R grant P202/12/G061, Department of Applied Mathematics, Charles University, 
Prague, Czech Republic, \texttt{fiala@kam.mff.cuni.cz}}, 
Jan Hubi\v{c}ka\thanks{This research was partially done while the second and third author took part in Trimester Universality and Homogeneity at Hausdorff Institute (Bonn) in fall 2013. Second author also acknowledge support of grant ERC-CZ LL-1201 of the Czech Ministry of Education and CE-ITI P202/12/G061 of GA\v CR. PIMS postdoctoral fellow, University of Calgary, Calgary, Canada, \texttt{hubicka@kam.mff.cuni.cz}}, Yangjing Long\thanks{Third author acknowledges the doctoral scholarship from the International Max-Planck Research School at the Max-Planck-Institute for Mathematics in the Natural Sciences, Leipzig. Max Planck Institute for Mathematics in the Sciences, Inselstrasse 22, D-04103 Leipzig, Germany, \texttt{ylong@mis.mpg.de}}}
\date{January 2014}
\maketitle

\begin{abstract}
We prove that for every $d\geq 3$ the homomorphism order of the class of line graphs of finite graphs with maximal degree $d$ is universal. This means that every finite or countably infinite partially ordered set may be represented by line graphs of graphs with maximal degree $d$ ordered by the existence of a homomorphism.
\end{abstract}

\section{Introduction}

An (undirected) {\em graph} $G$ is a pair $G=(V_G,E_G)$ such that $E_G$ is a set of
2-element subsets of $V_G$. We denote by $V_G$ {\em the set of vertices of $G$}
and by $E_G$ {\em the set of edges of $G$}. We consider only finite graphs.

For given graphs $G$ and $H$ a {\em homomorphism} $f:G\to H$ is a mapping
$f:V_G\to V_H$ such that $\{u,v\}\in E_G$ implies $\{f(u),f(v)\}\in E_H$.  We
denote the existence of a homomorphism $f:G\to H$ by $G\Hom H$. This allows us to
consider the existence of a homomorphism, $\Hom$, to be a (binary) relation on
the class of finite graphs.

The relation $\to$ is reflexive (identity is a homomorphism) and transitive
(composition of two homomorphisms is still a homomorphism). Thus the existence
of a homomorphism induces a quasi-order on the class of all finite graphs.
This quasi-order can be transformed, in a standard way, to a partial order by
considering only the isomorphism types of vertex-inclusion minimal elements of
each equivalency class of $\to$ (the graph cores). The resulting partial order
is known as the {\em homomorphism order} of graphs. We denote $G\leq H$ whenever $G\to H$.

This partial order generalizes graph coloring and its rich structure is a
fruitful area of research, see the Partial Order of Graphs and Homomorphisms chapter in the monograph of Hell and Ne\v{s}et\v{r}il~\cite{Hell2004}.
The richness of homomorphism order is seen from the perhaps surprising fact that
every countable (finite or infinite) partial order can be found as one of its suborders. This property of
partial order is known as {\em universality}.  The existence of such countable
partial orders may seem counter-intuitive: there are uncountably many
different partial orders and they are all ``packed'' into a single countable
structure. The existence of such a partial order is given by the classical Fra\"{\i}ss\' e Theorem. There are however few known explicit representations of such partial orders, see \cite{Hedrlin1969,Hubicka2005a,Hubicka2011}. The universality of the homomorphism order of graphs was first proved by Hedrl\'\i{}n and Pultr
in a categorical setting as a culmination of several papers (see~\cite{pultr1980combinatorial} for a complete proof).

It is interesting to observe that almost all naturally defined partial orders
of graphs fail to be universal for simple reasons --- by the absence of
infinite increasing chains, decreasing chains or anti-chains.  Several variants
to graph homomorphisms are considered in our sequel paper~\cite{Fiala} and only locally
constrained homomorphisms are shown to produce universal orders. It is also a
deep result of Robertson and Seymour that the graph minor order is a well
quasi-order and thus not universal~\cite{Robertson2004}.

Although universality is a quite rare property of orders on graphs, it is a very
robust property of the homomorphism order. Hubi\v{c}ka and Ne\v{s}et\v{r}il showed  
that even the homomorphism order of oriented paths is universal~\cite{Hubicka2005}.
This
result can be used to easily show the universality of other classes of graphs,
such as planar graphs or series-parallel graphs~\cite{Hubicka2004}. This shows that complex graphs or complex computational problems (homomorphism testing is polynomial on oriented paths) are not needed to build universal partial orders, which is something that was not anticipated by Ne\v{s}et\v{r}il and Zhu a decade earlier~\cite{JaroslavNesetril1996}.

Recently, D.~E.~Roberson proposed a systematic study of the homomorphism order of the class
of line graphs~\cite{Roberson}.
Here, a \emph{line graph} of an undirected graph $G$, denoted by $L(G)$, is a graph
$H=(V_H,E_H)$ such that $V_H=E_G$ and two distinct vertices of $L(G)$ are adjacent if
and only if their corresponding edges share a common endpoint in $G$. Because edges of $G$ play the role of vertices of $L(G)$, we will refer vertices of line graphs as {\em nodes}.

The classical Vizing theorem gives an insight into the structure of the homomorphism
order of line graphs in terms of \emph{chromatic index} $\chi'(G)$
that is the chromatic number of $L(G)$, i.e. the minimum number of colors needed to color edges of a graph $G$ such that edges with a common vertex receive different colors:

\begin{thm}[Vizing \cite{Vizing1964}]
For any graph $G$ of maximum degree $d$ it holds that $\chi'(G) \le d+1$.
\end{thm}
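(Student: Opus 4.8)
The plan is to prove the bound by induction on the number of edges of $G$, showing that the edges can be properly colored using the palette $\{1, 2, \ldots, d+1\}$. The base case (no edges) is trivial, so suppose $G$ has at least one edge $uv_0$, and by the inductive hypothesis the graph obtained by deleting $uv_0$ admits a proper edge coloring $c$ with these $d+1$ colors. The goal is to recolor some edges and then color $uv_0$, keeping the total number of colors at $d+1$. The central observation is that, since every vertex $w$ has degree at most $d$ but the palette has $d+1$ colors, at least one color is \emph{missing} at $w$, that is, not used on any edge incident to $w$. The difficulty is that the color missing at $u$ need not coincide with the one missing at $v_0$, so a direct assignment may fail; we need a mechanism to reshuffle colors.

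First I would introduce the key combinatorial device, a \emph{Vizing fan}: a sequence $v_0, v_1, \ldots, v_k$ of distinct neighbors of $u$ such that $uv_0$ is the uncolored edge and, for each $1 \le i \le k$, the color $c(uv_i)$ is missing at $v_{i-1}$. Such fans support a \emph{rotation}: given the fan up to $v_j$, one may recolor each edge $uv_{i-1}$ with the former color of $uv_i$ for $i = 1, \ldots, j$, leaving $uv_j$ uncolored instead of $uv_0$; this remains a proper coloring precisely because each $c(uv_i)$ was missing at $v_{i-1}$. Thus the uncolored edge can be shifted along the fan at will. I would then take a fan that is \emph{maximal}, in the sense that it cannot be extended by any further neighbor of $u$, and fix a color $\alpha$ missing at $u$ and a color $\beta$ missing at the last vertex $v_k$.

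The completion of the argument splits into two cases. If some color $\gamma$ is missing simultaneously at $u$ and at some fan vertex $v_j$, then rotating the fan to free $uv_j$ and assigning $\gamma$ to $uv_j$ finishes the coloring. Otherwise the color $\alpha$ missing at $u$ is present at every fan vertex, and I would invoke a \emph{Kempe chain} argument: consider the maximal path $P$ starting at $u$ whose edges alternate between colors $\alpha$ and $\beta$. Swapping these two colors along $P$ preserves properness, and by analyzing where $P$ terminates relative to $v_k$ and to the earlier fan vertices, one shows that after the swap some color becomes simultaneously missing at $u$ and at the appropriate endpoint of a (possibly rotated) fan, reducing back to the first case.

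The main obstacle I anticipate is exactly this last case analysis: one must track carefully how the alternating $\alpha\beta$-path interacts with the fan, in particular ruling out that $P$ could end at two different fan vertices in a way that makes the two competing recoloring attempts collide. The heart of Vizing's proof lies in showing that maximality of the fan, together with the fact that $\alpha$ is missing at $u$ but present at each $v_i$, forces the path to behave so that exactly one consistent recoloring is available. Everything else — the induction skeleton, the counting that guarantees a missing color, and the validity of the rotation and Kempe swap operations — is routine once these invariants are in place.
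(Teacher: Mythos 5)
The paper itself offers no proof of this statement: Vizing's theorem is quoted as a classical result with a citation to Vizing's 1964 paper and is used purely as a black box (in Corollary~1.1 and Lemma~3.1). So there is no in-paper argument to compare yours against, and your attempt has to stand on its own. What you sketch is the standard fan--Kempe-chain proof, and every assertion in it is true: induction on edges, the existence of a missing color at each vertex, the definition of a fan, the validity of rotation, and the overall two-case structure.

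However, as written it is a plan rather than a proof, and the gap sits exactly where you yourself point: the sentence ``one shows that after the swap some color becomes simultaneously missing at $u$ and at the appropriate endpoint of a (possibly rotated) fan'' is the entire content of Vizing's theorem, and it is deferred, not proved. Concretely, three things are missing. (i) The actual use of maximality of the fan: since $\beta$ is missing at $v_k$, maximality forces the $\beta$-colored edge at $u$ (if any) to go to a fan vertex $v_j$, so that $\beta$ is missing at $v_{j-1}$ as well; this is the pivot of the whole argument and is never stated. (ii) The structural fact about Kempe components: because $u$, $v_{j-1}$, $v_k$ each miss one of $\alpha,\beta$, each is an \emph{endpoint} of a path component of the subgraph of edges colored $\alpha$ or $\beta$, and a path has only two endpoints --- this is what rules out the ``collision'' you worry about. (iii) The closing case split and its bookkeeping: if the $\alpha\beta$-path from $v_{j-1}$ does not end at $u$, swap along it, rotate the fan up to $v_{j-1}$, and color $uv_{j-1}$ with $\alpha$; otherwise that path ends at $u$, whence the $\alpha\beta$-path from $v_k$ cannot also end at $u$, so swap along it, rotate the full fan, and color $uv_k$ with $\alpha$. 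In both subcases one must also check the fan survives the swap, which holds because every fan edge other than $uv_j$ has a color outside $\{\alpha,\beta\}$ (edges at $u$ are distinctly colored and $\alpha$ is missing at $u$). With (i)--(iii) supplied your outline becomes a complete and correct proof; without them it is an accurate description of someone else's.
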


Since the line graph of a graph with a vertex of degree $d$ contains a $d$-clique, the Vizing theorem splits graphs into two classes. {\em Vizing class 1} contains the graphs whose chromatic index is the same as the maximal degree of a vertex, while {\em Vizing class 2} contains the remaining graphs.

The approach taken by Roberson~\cite{Roberson} divides the class of line graphs into
intervals.  By $[K_{n},K_{n+1})_\mathcal{L}$ we denote the class of all line
graphs $L(G)$ such that $K_{n}\leq L(G)<K_{n+1}$. The line graphs in each interval have a particularly simple characterization:

\begin{corollary}\label{cor:Vizing}
The intervals $[K_{d},K_{d+1})_\mathcal{L}$ consist of line graphs of graphs whose maximum degree is $d$.  
\end{corollary}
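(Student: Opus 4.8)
The plan is to rewrite the two order-theoretic conditions defining the interval as conditions on the clique number and the chromatic number of $L(G)$, and then to read both invariants off the maximum degree $\Delta(G)$ by combining Vizing's theorem with the classical description of cliques in line graphs. I work throughout with $d\ge 3$, which is the range relevant here. First I record the standard dictionary: for any graph $H$, a homomorphism $K_n\to H$ exists iff $H$ contains a clique on $n$ nodes, i.e.\ $\omega(H)\ge n$ (the image of a complete graph is a clique), while $H\to K_n$ exists iff $\chi(H)\le n$, since a homomorphism into $K_n$ is exactly a proper $n$-colouring. Applying this with $H=L(G)$, the relation $K_d\le L(G)<K_{d+1}$ unfolds into $K_d\to L(G)$, which gives $\omega(L(G))\ge d$; $K_{d+1}\not\to L(G)$, which gives $\omega(L(G))\le d$; and $L(G)\to K_{d+1}$, which gives $\chi(L(G))=\chi'(G)\le d+1$. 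The first two together say exactly $\omega(L(G))=d$.

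Next I would express $\omega(L(G))$ through $G$. A clique of $L(G)$ is a family of edges of $G$ that pairwise meet, and such a family is either a star centred at one vertex or the three edges of a triangle; consequently $\omega(L(G))=\Delta(G)$ unless $G$ contains a triangle and $\Delta(G)\le 2$, in which case $\omega(L(G))=3$. In particular, once $\Delta(G)=d\ge 3$ the star through a vertex of maximum degree already forces $\omega(L(G))=d$, and Vizing's theorem gives $\chi'(G)\le\Delta(G)+1=d+1$. Hence every $G$ with $\Delta(G)=d$ satisfies both $\omega(L(G))=d$ and $\chi(L(G))\le d+1$, so $L(G)\in[K_d,K_{d+1})_{\mathcal L}$; this is the forward inclusion.

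For the reverse inclusion I start from $\omega(L(G))=d$ and must produce a graph of maximum degree exactly $d$ with line graph $L(G)$. When $d\ge 4$ the clique description forces the $d$-clique to be a star, so $G$ has a vertex of degree $d$; since a vertex of degree $k$ always yields a $k$-clique we also have $\Delta(G)\le\omega(L(G))=d$, whence $\Delta(G)=d$ and $G$ itself is the required witness. The delicate point, and the step I expect to be the main obstacle, is $d=3$: here $\omega(L(G))=3$ can come from a triangle of $G$ while $\Delta(G)=2$, as for $G=C_3$. This is handled by the Whitney exception $K_3=L(C_3)=L(K_{1,3})$: a triangle in a graph of maximum degree $2$ is a $C_3$ component, and replacing each such component by the claw $K_{1,3}$ leaves the line graph unchanged (the line graph of a disjoint union is the disjoint union of the line graphs) while raising the maximum degree to exactly $3$, because $\Delta$ remains bounded by $\omega(L(G))=3$. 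Justifying the star-or-triangle classification and treating this $d=3$ exception cleanly are the parts that need the most care; everything else is routine bookkeeping with the dictionary above.
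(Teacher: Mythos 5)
Your proof is correct and follows essentially the same route as the paper's: translate the interval conditions into $\omega(L(G))=d$ together with $\chi(L(G))=\chi'(G)\le d+1$, get the upper bound from Vizing's theorem, and get the clique conditions from the fact that cliques in line graphs arise from stars (or triangles). The one genuine difference is at $d=3$, and it is in your favor. The paper's proof disposes of the reverse inclusion with the sentence ``the same argument used for $K_d\le L(G)$ implies that $G$ contains a vertex of degree $d$,'' but that argument (a clique can only come from edges sharing a common vertex) is literally valid only for cliques of size at least $4$; for $d=3$ a $3$-clique in $L(G)$ may instead come from a triangle of $G$, as in $L(C_3)=K_3$ with $\Delta(C_3)=2$. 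You correctly single this out as the delicate step and repair it via the Whitney exception $L(C_3)=L(K_{1,3})=K_3$: replacing each triangle component of a maximum-degree-$2$ graph by a claw leaves the line graph unchanged and produces a witness of maximum degree exactly $3$. This makes explicit that the corollary must be read as an equality between two classes of line graphs (each member of the interval \emph{is} the line graph of \emph{some} graph of maximum degree $d$), which is the reading under which the statement is true; the paper's own proof glosses over this point, so your version is the more complete of the two.
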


\begin{proof}
The existence of a $(d+1)$-edge coloring is equivalent with $L(G)\le K_{d+1}$. Note that for the Vizing class 1 we indeed have $L(G)\le K_d \le K_{d+1}$.

As $G$ contains a vertex of degree $d$, we have $K_d\le L(G)$, indeed $K_d\subseteq L(G)$. On the other hand, a clique on $d+1\ge 4$ vertices can be formed only from $d+1$ edges sharing a common vertex, hence $K_{d+1} \not\le L(G)$. The same argument used for $K_d\le L(G)$ implies that $G$ contains a vertex of degree $d$.
\end{proof}

The line graphs can be considered as almost perfect graphs (a {\em perfect graph} is a
graph in which the chromatic number of every induced subgraph is equal to the size
of the largest clique of that subgraph). The homomorphism order of the class of
perfect graphs is a trivial chain, since the core of every perfect graph is a
clique.  The almost-perfectness of the class of line graphs suggests that the
homomorphism order of this class may be more constrained in its structure
than the homomorphism order of graphs in general, and indeed many of the results about properties of the homomorphism order can not be easily restricted to the line graphs.

Roberson, in~\cite{Roberson}, showed that the homomorphism order of line graphs
contains many gaps. This is a first important difference from the structure of
the homomorphism order of graphs which was shown (up to one exception) to be
dense by Welzl~\cite{Welzl1982}.  Roberson also asked whether every interval
$[K_d,K_{d+1})_L$, $d\geq 3$ contains infinitely many incomparable elements.
The answer is trivially negative for graphs with maximal degree 1 and 2.  We give
an affirmative answer to this problem.  Indeed, we show:

\begin{thm}
\label{thm:main}
The homomorphism order of line graphs is universal 
on every interval $[K_d,K_{d+1})_{\mathcal L}$ for $d\geq 3$.
\end{thm}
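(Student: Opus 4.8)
The plan is to prove universality by exhibiting, for each fixed $d\ge 3$, an order-embedding of a known universal partial order into the interval $[K_d,K_{d+1})_{\mathcal L}$. Concretely, I would start from the universality of the homomorphism order of finite oriented paths established by Hubi\v{c}ka and Ne\v{s}et\v{r}il~\cite{Hubicka2005} (or from some auxiliary universal class of directed graphs), and construct a map $\Phi$ assigning to each such object $A$ an undirected graph $G_A$ of maximum degree \emph{exactly} $d$. By Corollary~\ref{cor:Vizing} this guarantees $L(G_A)\in[K_d,K_{d+1})_{\mathcal L}$ automatically, so the entire difficulty is shifted onto making $\Phi$ both preserve \emph{and} reflect homomorphisms, i.e. $A\to B$ if and only if $L(G_A)\to L(G_B)$. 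Such a map embeds the universal order of the source class into the interval, and universality follows.

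First I would handle the base case $d=3$. The key structural observation is that if $G$ is triangle-free, then the only triangles of $L(G)$ are the stars of its degree-$3$ vertices; hence $L(G)$ contains $K_3$ but not $K_4$ and indeed lies in $[K_3,K_4)_{\mathcal L}$. Moreover, since a homomorphic image of $K_3$ is again a triangle, any homomorphism $L(G)\to L(H)$ must send each such triangle into a triangle of $L(H)$, and is therefore forced to map the branch (degree-$3$) vertices of $G$ to branch vertices of $H$. This lets me design $G_A$ as a triangle-free cubic-like skeleton in which branch vertices and the paths joining them code the vertices and arcs of $A$, with the \emph{orientation} of each arc recorded by an asymmetric gadget (for instance by distinct path lengths on the two sides of a vertex, or by small rigid attachments breaking the head/tail symmetry). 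The forward implication $A\to B\Rightarrow L(G_A)\to L(G_B)$ is then a routine gadget-by-gadget construction, while the reflecting implication uses the triangle-to-triangle rigidity to show that every line-graph homomorphism respects the coding and hence descends to a homomorphism $A\to B$.

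For $d>3$ I would lift the $d=3$ construction by a uniform degree-raising operation: replace each branch vertex by a degree-$d$ gadget, or attach suitable pendant cliques, so that the maximum degree becomes exactly $d$ while the controlled clique structure is preserved. This keeps $L(G_A)$ inside $[K_d,K_{d+1})_{\mathcal L}$ and leaves the rigidity argument intact; alternatively one parameterises the gadgets by $d$ from the outset and runs the whole argument uniformly.

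The hard part will be the reflecting direction under the degree constraint. Unlike homomorphisms of the underlying graphs, an arbitrary homomorphism $L(G_A)\to L(G_B)$ need not arise from any vertex map $G_A\to G_B$, and line graphs carry a great deal of local symmetry, so the gadgets must be rigid enough that the only line-graph homomorphisms are the intended ones. The tension is that rigidity is easiest to enforce with high-degree, clique-rich gadgets, yet here the maximum degree is capped at $d$ (and for $d=3$ the available cubic gadgets are very limited), while any vertex of degree exceeding $d$ would eject $L(G_A)$ from the interval. Reconciling a faithful coding of orientation with this strict degree bound is the crux of the whole proof.
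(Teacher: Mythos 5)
Your proposal correctly frames the task (embed a known universal order; use Corollary~\ref{cor:Vizing} for membership in the interval; prove the coding both preserves and reflects homomorphisms), and your triangle observation for $d=3$ is sound and indeed parallels a lemma of the paper: in a triangle-free graph of maximum degree~3 the only triangles of the line graph are the stars of degree-3 vertices, so any homomorphism of line graphs induces a map on branch vertices. But there is a genuine gap exactly where you place the ``crux'': you never exhibit the rigid gadget, and the concrete mechanisms you do suggest would fail. Encoding the orientation of arcs ``by distinct path lengths on the two sides of a vertex'' cannot work, because paths in line graphs fold under homomorphisms: the line graph of a path is again a path, which is homomorphically equivalent to a single edge, so nothing forces the image of a long connecting path to stay long, or even to be injective. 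Any length-based coding is therefore invisible to the homomorphism order unless some additional mechanism enforces local injectivity --- and supplying that mechanism under the degree cap is precisely the part your plan leaves open. The same objection applies to ``small rigid attachments'': with maximum degree capped at $3$ there are no clique-based rigid gadgets, and you offer no alternative source of rigidity.

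The paper resolves both difficulties with ideas absent from your proposal. First, rigidity under a degree cap is obtained not from cliques but from a parity/counting argument: the $d$-dragon ($K_{d+1}$ with one edge replaced by a path on 3 vertices) is Vizing class~2 (Lemma~\ref{lem:chrInd_Dragon}), and this forces its line graph to be a core (Lemma~\ref{lem:core}), since a non-bijective homomorphism would yield a $d$-edge-coloring. Second, the paper does not encode oriented paths at all; it sidesteps orientation by embedding a divisibility-based universal order (Theorem~\ref{thm:universal}). The building blocks are sunlets $S_n$ with a dragon indicator applied to every edge; the dragons pin down special nodes and connecting triangles (Lemmas~\ref{lem:special} and~\ref{lem:triangles}), so every homomorphism $L(G_{n,d})\to L(G_{n',d})$ is forced to be a cyclic wrap, which exists if and only if $n'$ divides $n$; finite disjoint unions of such graphs then represent the universal order of finite sets of integers. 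The pendant edges of the sunlet are exactly the anti-folding device whose absence breaks your path-length coding. In short, your high-level strategy is reasonable, but the reflecting direction --- which you yourself identify as the whole difficulty --- remains unproven, and the specific devices you propose for it do not survive scrutiny.
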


This further develops the results on the universality of the homomorphism order of special classes of graphs (see e.g.~\cite{Hubicka2004,Hubicka2005,Nesetril2006,Nesetril2007}), and on universal partially ordered structures in general (see e.g.~\cite{Johnston1956,Hedrlin1969,Hubicka2005a,Lehtonen2008,Lehtonen2010,Hubicka2011,Kwuida2011}).

As a special case, the universality of interval $[K_3,K_4)_L$ follows from the
construction given by \v{S}\'amal~\cite{Samal2013}. This is not an obvious
observation --- one has to carefully check that for the graphs constructed in~\cite{Samal2013}
the existence of circulation coincide with the existence of a homomorphism of
line graphs. Our proof uses a new approach based on a new divisibility argument which we have introduced for a similar occasion~\cite{Fiala}. This argument leads to 
a simpler construction without the need of 
complex gadgets (Blanu\v sa snarks) used by \v{S}\'amal~\cite{Samal2013}.

The paper is organized as follows.  In Section~\ref{sec:universal} we briefly review
our construction from \cite{Fiala} about the universality of the divisibility order.  In
Section~\ref{sec:dragon} we prove basic properties of a ``dragon'' graph that
is a particularly simple example of Vizing class 2 graph that is also a graph
core. Section~\ref{sec:indicator} briefly reviews the indicator construction
and Section~\ref{sec:final} contains the proof of our main result. In
Section~\ref{sec:concluding} we discuss an extension of our construction to
$d$-regular graphs and some additional observations on the homomorphism order
of line graphs.

\section{A particular universal partial order}
\label{sec:universal}

Let $(P,\leq_P)$ be a partial order where $P$ consists of all finite set of
integers and for $A,B\in P$ we put $A\leq_P B$ if and only if for every $a\in
A$ there is $b\in B$ such that $b$ divides $a$.  We make use of the following:

\begin{thm}[\cite{Fiala}]
\label{thm:universal}
The order $(P,\leq_P)$ is a universal partial order.
\end{thm}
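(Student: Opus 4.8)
The plan is to prove universality by a back-and-forth argument, showing that $(P,\leq_P)$ contains an isomorphic copy of the generic (\Fraisse-limit) partial order $U$ of the class of all finite partial orders; since $U$ is universal, this yields the claim. The only thing that has to be verified is that $(P,\leq_P)$ is a countable model of the extension axioms of $U$, i.e.\ the \emph{one-point extension property}: for every finite set $F=\{\phi_1,\dots,\phi_n\}\subseteq P$ and every partition $F=D\cup \mathit{Up}\cup I$ that is \emph{consistent} (meaning $\phi<_P\phi'$ for all $\phi\in D$, $\phi'\in \mathit{Up}$, and no element of $D$ lies above, nor element of $\mathit{Up}$ below, an element of $I$), there is $z\in P$ with $\phi<_P z$ for all $\phi\in D$, with $z<_P\phi$ for all $\phi\in \mathit{Up}$, and with $z$ incomparable to every element of $I$. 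A standard greedy construction then embeds $U$ (or directly any prescribed countable poset) into $(P,\leq_P)$ one element at a time.

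The decisive advantage of this route is that it dissolves the obstacle that defeats the naive encodings: a direct map sending each element $x$ to the set of codes of its down-set realizes the order but produces \emph{infinite} sets, whereas an arbitrary countable order has elements lying above infinitely many others. In the extension-property formulation each step confronts only the finitely many elements of $F$, so the witness $z$ is a finite set of integers \emph{by construction} and the infiniteness never arises.

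It remains to build the witness $z$, and this is where the divisibility structure does the work. Recall that $\phi\leq_P\psi$ means every integer of $\phi$ has a divisor in $\psi$; thus, on the larger side a single small integer $b$ covers all of its multiples, while on the smaller side an integer is easy to dominate once it carries an extra prime divisor. I would maintain a reservoir of primes disjoint from those occurring in $F$ and set $z$ to consist of integers built as follows: to force $z<_P\phi$ for every $\phi\in \mathit{Up}$, make each integer of $z$ a product incorporating one chosen representative drawn from $\phi$, so that it is divisible by an element of each such $\phi$; to force $\phi<_P z$ for every $\phi\in D$, include in $z$ suitable divisors of the integers of $\phi$, so that each such integer acquires a divisor in $z$. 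Consistency of the partition (every member of $D$ already lies below every member of $\mathit{Up}$) is exactly what makes these two families of requirements jointly satisfiable. Finally I would multiply in fresh primes to guarantee \emph{strictness} (blocking the reverse dominations) and to guarantee \emph{incomparability} to each $\phi\in I$, since a fresh prime inside an integer of $z$ cannot be divided by anything in $\phi$, and, by avoiding divisors of some integer of $\phi$, neither can $\phi$ divide into $z$.

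The main obstacle is precisely this last verification: exhibiting \emph{one} finite set of integers that simultaneously sits below all of $\mathit{Up}$, above all of $D$, and incomparable to all of $I$, \emph{without} the divisibility relations among the chosen primes manufacturing any spurious comparability. The bookkeeping --- deciding which primes are shared so as to create the intended dominations and which are kept fresh to break the unintended ones, and then checking both directions of each required relation and non-relation --- is the technical heart, the divisibility argument. Once it is in place, countability of $(P,\leq_P)$ together with the extension property yields, by back-and-forth, an embedding of every countable partial order, establishing universality. (An alternative organization observes that $(P,\leq_P)$ is the domination order over the divisibility poset $R=(\mathbb{Z}_{>0},\mid)$; since an order-embedding $Q\hookrightarrow R$ lifts to an embedding of the corresponding domination orders, it suffices to embed a convenient universal domination order over finite sets under reverse inclusion, via the square-free coding $S\mapsto\prod_{k\in S}p_k$, reducing the statement to the same divisibility bookkeeping.)
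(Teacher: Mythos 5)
The statement you reduce everything to --- that $(P,\leq_P)$ satisfies the one-point extension property --- is false, and the failure is structural, not a matter of bookkeeping. Up to equivalence, $(P,\leq_P)$ is a join-semilattice: for any $A,B\in P$ the union $A\cup B$ is a \emph{least} upper bound, because $A\leq_P A\cup B$ and $B\leq_P A\cup B$, while $A\leq_P C$ and $B\leq_P C$ together give $A\cup B\leq_P C$. Consequently the consistent configuration $D=\bigl\{\{6\},\{35\}\bigr\}$, $\mathit{Up}=\bigl\{\{6,35\}\bigr\}$, $I=\emptyset$ admits no witness: any $z$ above both $\{6\}$ and $\{35\}$ is above their join $\{6,35\}$, so it cannot be strictly below it. (Checking directly: $z\leq_P\{6,35\}$ forces every member of $z$ to be a multiple of $6$ or of $35$; the only such integer dividing $6$ is $6$ itself and the only one dividing $35$ is $35$, so $\{6\}\leq_P z$ and $\{35\}\leq_P z$ force $6,35\in z$, and then $\{6,35\}\leq_P z$.) There are further obstructions of the same kind: $\{1\}$ is a maximum of $(P,\leq_P)$ and $\emptyset$ a minimum, so no element is incomparable to either, whereas the generic poset $U$ has no joins, no maximum and no minimum. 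Hence $(P,\leq_P)$ is not a model of the extension axioms, and your witness construction cannot be repaired by any choice of representatives and fresh primes.

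The underlying slip is conflating universality (containing a copy of every countable poset, equivalently of $U$, as a suborder) with genericity, i.e.\ existential closedness (modelling the extension axioms). The second property implies the first but is strictly stronger, and it is exactly the part that fails: $(P,\leq_P)$ is universal without being generic. For the same reason your fallback variant, greedily embedding a prescribed poset one element at a time, also breaks: having mapped $q_1\mapsto\{6\}$, $q_2\mapsto\{35\}$, $q_3\mapsto\{6,35\}$ with $q_1,q_2<_Q q_3$, you are stuck as soon as the next element of $Q$ must sit strictly between $q_1,q_2$ and $q_3$. Any correct proof must build the embedding with global foresight rather than pointwise. This is what the paper does: it fixes a linear order on $Q$ (taking $Q\subseteq\mathbb{P}$) in which every element has only finitely many predecessors, splits $\leq_Q$ into a past-finite forwarding order $\leq_f$ and a future-finite backwarding order $\leq_b$, embeds $(Q,\leq_b)$ into the divisibility order by products of primes, and only then encodes each $x\in Q$ as the finite set of codes of its $\leq_f$-down-set. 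Comparabilities with elements appearing later in the enumeration are pre-installed inside the integers via divisibility, rather than being added on demand --- precisely the foresight that a back-and-forth argument in a non-generic structure cannot supply.
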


To make the paper self-contained we give a short proof of this result.
See also \cite{Hubicka2004,Hubicka2011} for related constructions of universal
partial orders.

We say that a countable partial order is {\em past-finite} if every down-set is
finite. Similarly a countable partial order is {\em future-finite} if every
up-set is finite. Again, we say that a countable partial order is {\em
past-finite-universal}, if it contains every
past-finite partial order as a suborder.
The {\em future-finite-universal} orders are defined analogously.

Let $P_f(A)$ denote the set of all finite subsets of $A$. The following lemma
extends a well known fact about representing finite partial orders by sets
ordered by the subset relation.
\begin{lemma}
\label{lem:pastfiniteuniv}
Let $A$ be countably infinite set.
Then $(P_f(A),\subseteq)$ is a past-finite-universal partial order.
\end{lemma}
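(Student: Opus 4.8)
The plan is to use the classical order-embedding of a poset into a power set by sending each element to its principal down-set, and to observe that the past-finiteness hypothesis is exactly what guarantees these down-sets are finite, so that they land inside $P_f(A)$ rather than merely inside the full power set.

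Let $(Q,\leq_Q)$ be an arbitrary past-finite partial order. Since $Q$ is countable and $A$ is countably infinite, I would first fix an injection of $Q$ into $A$ and henceforth regard $Q$ as a subset of $A$. For each $x\in Q$ define its principal down-set
$$D(x)=\{y\in Q: y\leq_Q x\}.$$
By the past-finiteness assumption every down-set of $Q$ is finite, so each $D(x)$ is a finite subset of $A$; hence the assignment $x\mapsto D(x)$ is a well-defined map from $Q$ into $P_f(A)$.

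It then remains to verify that this map is an embedding of partial orders, that is, that $x\leq_Q x'$ holds if and only if $D(x)\subseteq D(x')$. For the forward implication, transitivity of $\leq_Q$ shows that any $y$ with $y\leq_Q x\leq_Q x'$ also satisfies $y\leq_Q x'$, whence $D(x)\subseteq D(x')$. For the converse, reflexivity gives $x\in D(x)$, so the inclusion $D(x)\subseteq D(x')$ forces $x\in D(x')$, which by definition means $x\leq_Q x'$. In particular the map is injective, and it is an isomorphism of $Q$ onto its image, so $(Q,\leq_Q)$ embeds into $(P_f(A),\subseteq)$.

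The only place where the hypothesis is genuinely used — and hence the single point to watch — is the finiteness of each $D(x)$: without past-finiteness the down-set map would take values in the full power set and could leave $P_f(A)$ altogether. Thus past-finiteness is precisely the condition that makes the standard subset representation of a poset fit inside the finite-subset order, and everything else is the routine verification that the down-set map is an order embedding.
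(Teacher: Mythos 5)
Your proof is correct and follows exactly the paper's approach: both map each element $x$ to its principal down-set $\{y\in Q : y\leq_Q x\}$, with past-finiteness ensuring the image lies in $P_f(A)$. You simply spell out the order-embedding verification that the paper leaves as ``easy to verify.''
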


\begin{proof}
Consider an arbitrary past-finite set $(Q,\leq_Q)$. Without loss of generality we assume that $Q\subseteq A$.  Assign to every $x\in Q$ a set $E(x) = \{y\in Q; y\leq x\}$. It is easy to verify that $E$ is an embedding 
$(Q,\leq_Q)\to(P_f(A),\subseteq)$.
\end{proof}

By the {\em divisibility partial order}, denoted by $(\mathbb{Z},\leq_d)$, we mean the partial order where vertices are natural numbers and $n$ is smaller than or equal to $m$ if $n$ is divisible by $m$.

\begin{lemma}
\label{lem:futurefiniteuniv}
The divisibility partial order $(\mathbb{Z},\leq_d)$ is future-finite-universal.
\end{lemma}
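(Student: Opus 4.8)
The plan is to combine the order-theoretic duality between past-finiteness and future-finiteness with the encoding of finite sets by squarefree integers provided by the fundamental theorem of arithmetic. The starting point is Lemma~\ref{lem:pastfiniteuniv}: for a countably infinite set $A$, the order $(P_f(A),\subseteq)$ is past-finite-universal. Taking the order-dual of this statement, I would first observe that $(P_f(A),\supseteq)$ is future-finite-universal. Indeed, the dual of a past-finite order is future-finite, since down-sets and up-sets swap roles under dualization; and the dual of a suborder is a suborder of the dual, so universality is inherited. Hence it suffices to embed $(P_f(A),\supseteq)$ order-preservingly into $(\mathbb{Z},\leq_d)$ for a suitable choice of $A$.

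For the embedding I would take $A$ to be the (countably infinite) set of all primes and define $\phi(S)=\prod_{p\in S}p$ for each finite set $S$ of primes, with $\phi(\emptyset)=1$. The three things to check are that $\phi$ is well defined, that it is injective, and that it translates the relation $\supseteq$ into $\leq_d$. Injectivity and the order correspondence both follow from unique factorization: since each $\phi(S)$ is squarefree, a prime $p$ divides $\phi(S)$ exactly when $p\in S$, so $\phi(S)\mid\phi(T)$ holds if and only if $S\subseteq T$. Unwinding the definition $n\leq_d m \Leftrightarrow m\mid n$ of the divisibility order, this reads $S\supseteq T \Leftrightarrow \phi(S)\leq_d\phi(T)$, which is exactly the required order embedding of $(P_f(A),\supseteq)$ into $(\mathbb{Z},\leq_d)$. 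Composing the embedding guaranteed by future-finite-universality of $(P_f(A),\supseteq)$ with $\phi$ then carries an arbitrary future-finite order into $(\mathbb{Z},\leq_d)$, proving the lemma.

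The main point requiring care is purely one of bookkeeping: because $\leq_d$ is defined so that larger integers sit lower in the order, the two order reversals---one from dualizing the subset lemma, one implicit in the definition of divisibility---must be tracked so that they cancel correctly, as they do above. The only genuinely substantive ingredient is the insistence on squarefree values, i.e.\ products of \emph{distinct} primes, which is what makes divisibility correspond exactly to the subset relation rather than to a relation on multisets; everything else is routine. I do not anticipate a serious obstacle here, as the heavy lifting (universality) has already been isolated in Lemma~\ref{lem:pastfiniteuniv}.
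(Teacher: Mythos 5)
Your proof is correct and takes essentially the same route as the paper's: apply Lemma~\ref{lem:pastfiniteuniv} with $A$ the set of primes and use the fact that for finite sets of primes $B\subseteq C$ holds if and only if $\prod_{p\in B}p$ divides $\prod_{p\in C}p$. The only difference is presentational --- you spell out explicitly the dualization bookkeeping (past-finite versus future-finite, and the reversal built into the definition of $\leq_d$) that the paper's two-line proof leaves implicit.
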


\begin{proof}
Denote by $\mathbb P$ the set of all prime numbers.  Apply Lemma \ref{lem:pastfiniteuniv} for $A=\mathbb P$.
Observe that $B\in P_f(\mathbb P)$ is a subset of $C\in P_f(\mathbb P)$ if and only if $\prod_{p\in B} p$ divides $\prod_{p\in C} p$.
\end{proof}

\begin{proof}[Proof of Theorem \ref{thm:universal}]
Let be given any partial order $(Q,\leq_Q)$. Without loss of generality we may assume that
$Q\subseteq \mathbb{P}$. This way we enforce the linear order $\leq$ on elements of
$Q$.  Think of $\leq$ as a specification of the time of creation of the elements
of $Q$.

We define two new orders on elements of $Q$: $\leq_f$, the \emph{forwarding order} and $\leq_b$, the \emph{backwarding order}:
\begin{enumerate}
\item We put $x\leq_f y$ if and only if $x\leq_Q y$ and $x \leq y$.
\item We put $x\leq_b y$ if and only if $x\leq_Q y$ and $x \geq y$.
\end{enumerate}

Thus we decompose the partial order $(Q,\leq_Q)$ into $(Q,\leq_f)$ and $(Q,\leq_b)$.
For every vertex $x\in Q$ both sets $\{y\mid y\leq_f x\}$ and $\{y\mid x\leq_b y\}$ are finite.
It follows that $(Q,\leq_f)$ is past-finite and $(Q,\leq_b)$ is future-finite.

Since $(\mathbb{Z},\leq_d)$ is future-finite-universal (Lemma~\ref{lem:futurefiniteuniv}), there is an embedding $E: (Q,\mathop{\leq_b}) \to (\mathbb{Z},\leq_d)$. We put for every $x\in Q$: $$U(x)=\{E(y)\mid y\leq_f x\}.$$
We show that $U$ is an embedding $U:(Q,\leq_Q) \to (P,\leq_P)$.

First we show that $U(x)\leq_P U(y)$ imply $x\leq_Q y$.
From the definition of $\leq_P$ we know that there is
$w\in Q$, $E(w)\in U(y)$, such that $E(x)\leq_d E(w)$. By the definition of $U$, $E(w)\in U(y)$ if and only if $w\leq_f y$. By the definition of $E$, $E(x)\leq_d E(w)$ if and only if $x\leq_b w$.
It follows that $x\leq_b w\leq_f y$ and thus also $x\leq_Q w\leq_Q y$ and consequently $x\leq_Q y$.

To show that $x\leq_Q y$ imply $U(x)\leq_P U(y)$ we consider two cases.
\begin{enumerate}
\item When $x\leq y$ then $U(x)\subseteq U(y)$ and thus also $U(x)\leq_P U(y)$.
\item Assume $x>y$ and take any $w\in Q$ such that $E(w)\in U(x)$.  From the definition of $U(x)$ we have
$w\leq_f x$. Since $x\leq_Q y$ we have $w\leq_Q y$.
If $w\leq y$, then $w\leq_f y$ and we have $E(w)\in U(y)$. In the other case if $w>y$ then $w\leq_b y$ and thus $E(w)\leq_d E(y)$. Because the choice of $w$ is arbitrary, it follows that $U(x)\leq_P U(y)$.
\end{enumerate}%
\end{proof}

\begin{example}
Consider partial order $(Q,\leq_Q)$ as specified by Figure \ref{fig:poset}. 
\begin{figure}
\centerline{\includegraphics{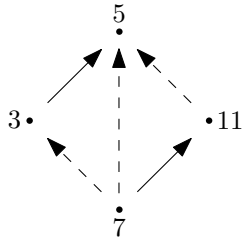}}
\caption{Partial order $(Q,\leq_Q)$. Dashed lines represent backwarding edges.}
\label{fig:poset}
\end{figure}
The following is the representation of $(Q,\leq_Q)$ in $(P,\leq_P)$ given by the proof of Theorem~\ref{thm:universal}.
$$E(3)=3, E(5)=5, E(7)=3\times 5\times 7, E(11)=5\times 11\hbox{, then}$$
$$U(3)=\{3\},U(5)=\{5,3\},$$ $$U(7)=\{3\times 5\times 7\},U(11)=\{3\times 5\times 7,5\times 11\}.$$
\end{example}

\section{Dragon graphs}
\label{sec:dragon}
We use a simple gadget called $d$-dragon which is also used in several constructions developed by Roberson~\cite{Roberson}. In our constructions, the parameter $d$ specifies the maximal degree of a vertex:

\begin{defi}
For $d\geq 3$, the \emph{$d$-dragon}\footnote{The name is derived from a visual similarity of this graph to a kite that in Czech language is called ``dragon''.}, denoted by $D_d$, is the graph created from $K_{d+1}$ by replacing one of its edges by a path on 3 vertices.
\end{defi}

The 3-dragon is depicted in Figure \ref{fig:dragon}.

\begin{figure}[t]
	\centering
	\includegraphics{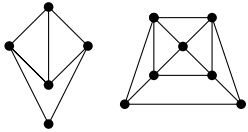}
	\caption{The 3-dragon $D_3$ and its line graph $L(D_3)$.}
	\label{fig:dragon}
\end{figure}

We proceed by a simple lemma about edge-colorings of dragons.

\begin{lemma}\label{lem:chrInd_Dragon}
For all $d\geq 3$ it holds that $D_d$ is a Vizing class 2 graph, i.e. its chromatic index is $d+1$.
\end{lemma}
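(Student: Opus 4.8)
The plan is to get the upper bound for free from Vizing's theorem and then to rule out a proper edge-coloring with only $d$ colors by a parity argument on the color classes. Every vertex of $D_d$ has degree at most $d$: subdividing the edge $\{u,v\}$ of $K_{d+1}$ creates a new middle vertex $w$ of degree $2$, leaves its two neighbors $u,v$ with degree $d$, and does not touch the remaining $d-1$ vertices. Hence Vizing's theorem gives $\chi'(D_d)\le d+1$, and it suffices to prove that $D_d$ admits no proper $d$-edge-coloring.

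I would argue by contradiction: assume $c$ is a proper coloring of $D_d$ with color set $\{1,\dots,d\}$. Let $\alpha=c(\{u,w\})$ and $\beta=c(\{v,w\})$; since these edges meet at $w$ we have $\alpha\ne\beta$. The key move is to pass to $G'=D_d-w$, which is exactly $K_{d+1}$ with the single edge $\{u,v\}$ deleted, so the restriction of $c$ is a proper $d$-edge-coloring of $G'$ on $d+1$ vertices. Here I record, for each vertex, which colors are \emph{missing} (appear on no incident edge): in $G'$ every vertex other than $u,v$ still has degree $d$ and misses nothing, while deleting $w$ removes exactly the single edge $\{u,w\}$ at $u$, so $u$ misses precisely $\alpha$, and symmetrically $v$ misses precisely $\beta$.

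The decisive observation is that each color class is a matching and therefore covers an \emph{even} number of the $d+1$ vertices of $G'$. Applying this to $\alpha$: its class covers every vertex except $u$ (as $v$ misses only $\beta\ne\alpha$ and the other vertices miss nothing, only $u$ misses $\alpha$), that is $d$ vertices, forcing $d$ to be even. Applying it instead to any color $\gamma\notin\{\alpha,\beta\}$ --- such a $\gamma$ exists because $d\ge 3$ leaves at least $d-2\ge 1$ of them --- no vertex misses $\gamma$, so its class covers all $d+1$ vertices, forcing $d+1$ to be even, i.e. $d$ to be odd. These two requirements are contradictory, so no proper $d$-edge-coloring of $D_d$ exists and $\chi'(D_d)=d+1$, which is the assertion.

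I expect the only genuine subtlety to be the missing-color bookkeeping after deleting $w$: one must verify that removing $w$ strips exactly one colored edge at each of $u$ and $v$ (so that exactly one new color becomes missing at each) and nothing elsewhere, and that $\alpha\ne\beta$ keeps the two distinguished colors distinct. Everything downstream is just the standard fact that a color class is a matching together with a two-line parity count, so once this bookkeeping is pinned down the contradiction is uniform across all $d\ge 3$.
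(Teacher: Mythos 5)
Your proof is correct, but it takes a genuinely different route from the paper's. The paper also gets the upper bound from Vizing's theorem, but for the lower bound it argues globally: any $d$-edge-coloring partitions the $\frac{d^2+d+2}{2}$ edges of $D_d$ into $d$ matchings, and it bounds the maximum matching size --- $\frac{d+1}{2}$ when $d$ is odd (since $D_d$ has $d+2$ vertices, an odd number), and $\frac{d+2}{2}$ when $d$ is even, with the extra observation that at most two matchings in the partition can attain this maximum because a perfect matching must use one of the two edges at the degree-2 vertex $w$ --- to conclude in both cases that $d$ matchings cannot cover all edges. This forces a case distinction on the parity of $d$. Your argument instead deletes $w$, does the missing-color bookkeeping at $u$ and $v$ (which you carry out correctly: $u$ misses exactly $\alpha$, $v$ misses exactly $\beta\neq\alpha$, all other vertices have full degree $d$ and miss nothing), and extracts a contradiction from the parity of the vertex sets covered by color classes: the class of $\alpha$ covers exactly $d$ of the $d+1$ vertices of $K_{d+1}$ minus an edge, forcing $d$ even, while the class of any third color $\gamma$ (which exists since $d\geq 3$) covers all $d+1$ vertices, forcing $d$ odd. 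What your approach buys is uniformity --- no case split on the parity of $d$, since the contradiction is internal --- and a clean localization of the obstruction at the subdivision vertex; what the paper's approach buys is that it needs nothing beyond the crude bound that a matching on $n$ vertices has at most $\lfloor n/2\rfloor$ edges together with the edge count of $D_d$, with no missing-color analysis. Both are elementary parity/counting arguments in the end --- the paper counts edges covered by matchings, you count vertices covered by each color class --- and both are complete.
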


\begin{proof}
By Vizing theorem, $L(D_d)$ is $(d+1)$-colorable.
We prove that $L(D_d)$ is not $d$-colorable. The number of edges of $D_d$ is $\frac{d(d+1)}2+1=\frac{d^2+d+2}2$, the number of vertices is $d+2$. We use the fact that every $k$-edge-coloring yields a decomposition of the graph into $k$ disjoint matchings. We consider two cases:
\begin{enumerate}
\item If $d$ is odd, then the maximum size of a matching in $D_d$ is $\frac{d+1}2$, so the
partition contains at least $d+1$ matchings. Thus the chromatic number of $L(D)$ is $d+1$.
\item
If $d$ is even, then the maximum size of a matching in $D_d$ is $\frac{d+2}{2}$. However note that $D_d$ 
has a vertex of degree 2, so any partition of edge set of $D_d$ into disjoint matchings contains at most 2 matchings of maximum cardinality. The others matchings have the size at most $\frac{d}2$.
It follows that $d$ matchings can cover at most $2(\frac{d+2}2)+(d-2)\frac{d}2=\frac{4+d^2}2$ edges. For $d\geq 4$ we have $\frac{4+d^2}2< \frac{d^2+d+2}2$ and thus the partition contains at least $d+1$ matchings, i.e. color classes.
\end{enumerate}
\end{proof}

A graph $G$ is a {\em core} if there is no homomorphism from $G$ to its
proper subgraphs. In our construction we will use the fact that the line graphs of $d$-dragons are cores
which we show in the following lemma.
\begin{lemma} \label{lem:core}
For every $d\geq 3$, the graph $L(D_d)$ is a core.
\end{lemma}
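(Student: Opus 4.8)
The plan is to show that every endomorphism $f\colon L(D_d)\to L(D_d)$ is surjective; since $L(D_d)$ is finite, a surjective endomorphism is automatically an automorphism, and this is equivalent to $L(D_d)$ being a core. Throughout I write the nodes of $L(D_d)$ as the edges of $D_d$, and I denote by $a,b$ the two endpoints of the subdivided edge, by $c$ the subdivision vertex, and by $v_1,\dots,v_{d-1}$ the remaining vertices of the original $K_{d+1}$. The degree-$d$ vertices of $D_d$ are exactly $a,b,v_1,\dots,v_{d-1}$, and each such vertex $x$ gives a $d$-clique $Q_x\subseteq L(D_d)$ consisting of the nodes incident with $x$. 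I first record the clique structure: for $d\ge 4$ it is classical that every clique of size at least $4$ in a line graph is a star, so $Q_a,Q_b,Q_{v_1},\dots,Q_{v_{d-1}}$ are precisely the maximum cliques and $\omega(L(D_d))=d$. Since $ab\notin E(D_d)$ we have $Q_a\cap Q_b=\emptyset$, while any two of the remaining cliques meet in exactly one node (the edge joining the corresponding vertices); thus the intersection pattern of the maximum cliques is $K_{d+1}$ with the single edge $ab$ removed.

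Next I would analyse how $f$ acts on cliques. As $L(D_d)$ has no loops, $f$ cannot identify two adjacent nodes, so $f$ is injective on every clique; hence $f$ maps each maximum clique bijectively onto a $d$-clique, which for $d\ge 4$ is again one of the $Q_y$. This defines a map $\pi$ on the set of maximum cliques with $f(Q_x)=Q_{\pi(x)}$. The heart of the argument is the following propagation property: if $\pi(x)=\pi(x')=y$ for two distinct cliques, then $\pi(z)=y$ for every common neighbour $z$ of $x$ and $x'$ in $D_d$. Indeed, the nodes $xz\in Q_x\cap Q_z$ and $x'z\in Q_{x'}\cap Q_z$ are adjacent (they share the vertex $z$), so their images are two \emph{distinct} nodes of $Q_y\cap Q_{\pi(z)}$; as two distinct cliques meet in at most one node, this forces $Q_{\pi(z)}=Q_y$.

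With this lemma the proof should close quickly for $d\ge 4$. Reading off common neighbours in $K_{d+1}-ab$, a collapse of two cliques $Q_{v_i},Q_{v_j}$ immediately forces $\pi$ to be constant, since their common neighbours are all the remaining vertices; a collapse of $\{Q_a,Q_b\}$ or of $\{Q_a,Q_{v_i}\}$ first forces at least two of the $Q_{v_k}$ to collapse, here using $d-2\ge 2$, and hence again makes $\pi$ constant. A constant $\pi$ means that $f(L(D_d))$ lies inside a single $Q_y\cong K_d$, giving a homomorphism $L(D_d)\to K_d$ and therefore $\chi(L(D_d))\le d$; this contradicts $\chi(L(D_d))=\chi'(D_d)=d+1$ from Lemma~\ref{lem:chrInd_Dragon}. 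Thus no collapse occurs, $\pi$ is a bijection, and since the cliques $Q_y$ cover every node of $L(D_d)$, the image of $f$ is all of $L(D_d)$; so $f$ is surjective, as required.

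The step I expect to be the main obstacle is exactly ruling out the collapse of cliques, and this is where the Vizing class $2$ property (Lemma~\ref{lem:chrInd_Dragon}) is indispensable: it is the only ingredient distinguishing $L(D_d)$ from its many proper subgraphs of equal chromatic number, so a purely chromatic argument cannot succeed. A second point needing separate care is the case $d=3$: there the triangles $\{a,v_1,v_2\}$ and $\{b,v_1,v_2\}$ yield additional maximum $3$-cliques, so the clique family is larger and the clean ``stars are the only maximum cliques'' description fails; I would treat $d=3$ by hand, either by re-running an analogous propagation analysis on the enlarged family of $3$-cliques or by verifying the single graph $L(D_3)$ directly.
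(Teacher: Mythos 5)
Your proof for $d\ge 4$ is correct and essentially identical to the paper's: both use the correspondence between $d$-cliques and degree-$d$ vertices (valid only for $d\ge 4$), the fact that distinct $d$-cliques share at most one node to propagate any collapse of cliques, and Lemma~\ref{lem:chrInd_Dragon} to rule out a constant clique map, concluding that the induced map is a bijection and hence that the endomorphism is an automorphism. For $d=3$ the paper carries out what you list as your second fallback: it verifies directly that $L(D_3)$ is not $3$-colorable while every proper induced subgraph is, so that an endomorphism into a proper subgraph would yield a $3$-coloring of $L(D_3)$ --- a contradiction; so your plan is sound there as well, though you left that case unexecuted.
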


\begin{proof}
For $d=3$ observe that $L(D_3)$, depicted in Figure~\ref{fig:dragon}, is not 3-colorable, 
while each of its induced subgraphs is. Hence the statement holds for $d=3$. 

For $d\ge 4$ denote the vertices of $D_d$ by $1,2,\dots, d,d+1,d+2$, where vertices 
$1,2,\dots,d+1$ have degree $d$ and the vertex $d+2$ is adjacent to vertices 1 and $d+1$ and has degree 2. 
The vertices of degree $d$ correspond to $d$-cliques in $L(D_d)$. 
Note that every node of $L(D_d)$ belongs to some $d$-clique, since every edge of $D_d$ is incident to a vertex of degree $d$. 
Each pair of those $d$-cliques share at most one node that corresponds to
the edge connecting the original pair of vertices. 
Note that the shared node is unique for each such pair.
Observe also that there are no other $d$-cliques in $L(D_d)$.
This follows from the fact that the only way to create a $d$-clique in a line graph
is by a vertex of degree at least $d$. See Figure~\ref{fig:4-dragon}.
\begin{figure}[t]
	\centering
	\includegraphics{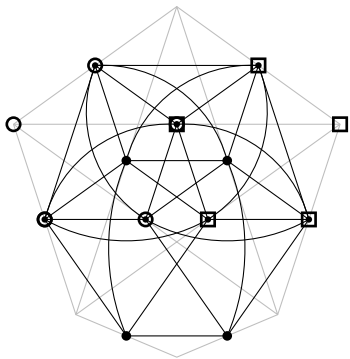}
	\caption{Line-graph of 4-dragon with cliques corresponding to the neighborhoods of two vertices distinguished.}
	\label{fig:4-dragon}
\end{figure}

Consider a homomorphism $f:L(D_d)\to L(D_d)$. Every homomorphism must map a $d$-clique
to a $d$-clique, and thus it defines a vertex mapping $f':\{1,2,\ldots ,d+1\}\to \{1,2,\ldots ,d+1\}$ in $D_d$.

\begin{figure}[t]
	\centering
	\includegraphics{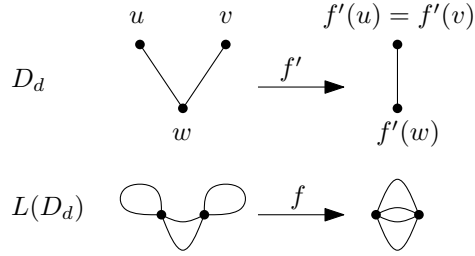}
	\caption{Mapping two $d$-cliques to the same target.}
	\label{fig:d-clique}
\end{figure}

Assume that there are distinct $u,v\in \{1,\ldots,d+1\}$ such that $f'(u)=f'(v)$, see Figure~\ref{fig:d-clique}.
Take any $w \in \{2,\ldots, d\}\setminus\{u,v\}\ne\emptyset$. Because the node shared 
by the cliques corresponding to $u$ and $w$ is unique, it is different from the node shared by the cliques corresponding to $v$ and $w$. Consequently,
the cliques corresponding to $f'(u)=f'(v)$ and $f'(w)$ share at least two nodes. 
Since distinct $d$-cliques of $L(D_d)$ may share at most one node, it follows
that $f'(w)=f'(u)$. Hence $f'$ is either a bijection 
or a constant function on $\{2,\ldots, d\}$ and thus also on $\{1,\ldots, d+1\}$. 
On the other hand, $f'$ can not be a constant function by Lemma~\ref{lem:chrInd_Dragon},
as otherwise such mapping would yield an edge coloring of $D_d$ by $d$ colors.

Since $f'$ is a bijection on vertices $\{1,2,\dots,d+1\}$ of $D_d$, the mapping $f'$ must be a bijection on the edges between these vertices. The only way to get a homomorphism $f$ of the whole $L(D_d)$ is to extend the mapping bijectively also on edges $\{1,d+2\}$ and $\{d+1,d+2\}$.
By this argument we have proved that $f$ is an isomorphism.
\end{proof}

\section{Indicator construction}
\label{sec:indicator}

We briefly describe the indicator technique, often called the ``arrow construction''~\cite{Hell2004}.
Informally, this construction means replacing every edge of a given graph $G$ by a copy of graph $I$
(an indicator) with two distinguished vertices identified with the endpoints of the edge.
Figure~\ref{fig:circle} (left) shows the result of indicator construction on the graph
in Figure~\ref{fig:sunlet} with indicator shown in Figure~\ref{fig:gadget} (left). We give a precise definition of this standard notion:

\begin{figure}[t]
	\centering
	\includegraphics{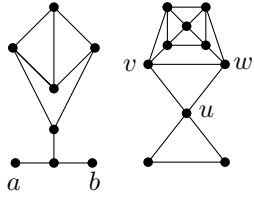}
	\caption{Indicator $I_3(a,b)$ and its line graph.}
	\label{fig:gadget}
\end{figure}

An {\em indicator} is any graph $I=(V_I,E_I)$ with two distinguished vertices $a$,$b$
having the property that there exists an automorphism of $I$ that interchanges $a$ and $b$.
(The last condition is needed in our context of undirected graphs.)

Given a graph $G=(V_G,E_G)$, we denote by $G*I(a,b)$ the graph $H=(V_H,E_H)$,
where each edge is replaced by an extra copy of $I(a,b)$, where the vertices $a$ and $b$
are identified with the original vertices.

Formally, to obtain $V_H$ we first take the Cartesian product 
$\vec{E}_G\times V_I$. Here $\vec{E}_G$ represent arbitrary
orientation of edges $E_G$ (that is, every unordered pair $\{x,y\}$ in $E_G$ corresponds uniquely to an ordered pair $(x,y)$ in $\vec{E}_G$ and vice versa).
Next factorize this set by the equivalence relation $\sim$ consisting of the following pairs:
$$((x,y),a)\sim((x,y'),a),$$
$$((x,y),b)\sim((x',y),b),$$
$$((x,y),b)\sim((y,z),a).$$

In other words, the vertices of $H$ are equivalence classes of the equivalence $\sim$.
For a pair $(e,x)\in E\times V_I$, the symbol $[e,x]$ denotes its equivalence class.

Vertices $[e_1,x_1]$ and $[e_2,x_2]$ are adjacent in $H$ if and only if there exists $(e'_1,x'_1)\in [e_1,x_1]$ and $(e'_2,x'_2)\in [e_2,x_2]$ such that $e'_1=e'_2$ and $\{x'_1,x'_2\}\in E_I$.

\section{Final construction}
\label{sec:final}

It is a standard technique to use an indicator construction to represent a class
of graphs which is known to be universal (such as oriented paths)
within another class of graphs (such as planar graphs) by using an appropriate
rigid indicator, see e.g. work of Hubi\v{c}ka and Ne\v{s}et\v{r}il~\cite{Hubicka2004}. It is then possible to show that the structure induced
by the homomorphism order is preserved by the embedding via the indicator construction.

While our construction also uses an indicator, the application is not so direct.
It is generally impossible to have an indicator that would turn a graph into
a line graph. We use the indicator to make graphs more rigid with respect
to homomorphisms of their line graphs and model the divisibility partial order
directly.

Our basic building blocks are the following:

\begin{defi}
The \emph{$n$-sunlet graph}, denoted by $S_n$, is the graph on $2n$ vertices obtained by attaching $n$ pendant edges to a cycle $C_n$, see Figure~\ref{fig:sunlet}.
\end{defi}
\begin{figure}[t]
	\centering
	\includegraphics{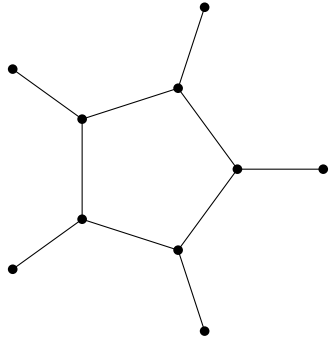}
	\caption{The 5-sunlet $S_5$.}
	\label{fig:sunlet}
\end{figure}

\begin{defi} For $d\geq 3$ the \emph{indicator} $I_d(a,b)$ is the graph created from the disjoint union of the dragon $D_d$ and a path on vertices $a,c,b$, where the middle vertex $c$ is connected by an edge to the vertex of degree 2 in $D_d$, see Figure~\ref{fig:gadget}.
\end{defi}

The desired class of graphs to show universality of interval $[K_{d},K_{d+1})_\mathcal{L}$, $d\ge 3$ consists of graphs $S_n*I_d(a,b)$ for 
$n\ge 3$. We abbreviate $S_n*I_d(a,b)$ by the symbol $G_{n,d}$.
An example, the graph $G_{5,3}$, is shown in Figure~\ref{fig:circle}. By squares are
indicated vertices of degree three of the original sunlet graph. The three adjacent edges 
are in the line graph drawn as the triplets joined by the dashed triangles.

\begin{figure}[t]
	\centering
	\includegraphics{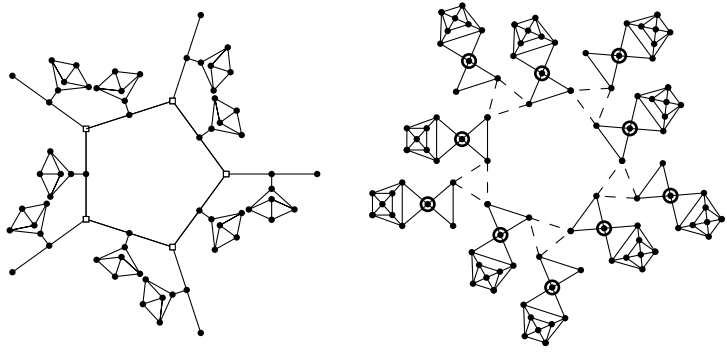}
	\caption{The graph $G_{5,3}=S_5*I_3(a,b)$ and its line graph.}
	\label{fig:circle}
\end{figure}

By Corollary~\ref{cor:Vizing} the graph $L(G_{n,d})$ is in the interval $[K_d,K_{d+1})_{\mathcal L}$ for every $n\geq 3$ and $d\geq 3$.

It remains to show the following property of the graphs $G_{n,d}$.
\begin{prop}
\label{prop:homo}
For every $d\geq 3$, $n\geq 3$, $n'\geq 3$ there is a homomorphism from $L(G_{n,d})$ to $L(G_{n',d})$ if and only if $n$ is divisible by $n'$.
\end{prop}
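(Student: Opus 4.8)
The plan is to prove the two directions separately. The implication ``$n'\mid n$'' is an explicit covering construction, while the reverse implication rests on the rigidity of the dragon gadget and ultimately reduces to the existence of a covering map between two cycles, for which divisibility is exactly the right condition.

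For the easy direction, write $n=kn'$ and consider the $k$-fold wrapping $\varphi\colon S_n\to S_{n'}$ given by $v_i\mapsto v_{i\bmod n'}$ on the cycle and $p_i\mapsto p_{i\bmod n'}$ on the pendants. Since $n'\ge 3$, the three neighbours of each cycle vertex are sent to three distinct vertices, so $\varphi$ is locally injective. Sending each indicator copy isomorphically onto the indicator copy of its image edge extends $\varphi$ to a locally injective homomorphism $G_{n,d}\to G_{n',d}$. A locally injective homomorphism identifies no two edges sharing a vertex, hence it induces a homomorphism of the corresponding line graphs; this is the desired map $L(G_{n,d})\to L(G_{n',d})$.

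For the converse, I would start from an arbitrary homomorphism $f\colon L(G_{n,d})\to L(G_{n',d})$ and first locate the images of the dragons. Each copy of $D_d$ is joined to the rest of $G_{n,d}$ only through the edge from $c$ to the degree-two vertex of the dragon, so in the line graph each dragon contributes a copy of $L(D_d)$ separated from the remainder by a single cut node. The dragon-free part of $L(G_{n',d})$ is $3$-colourable (its only cliques are the triangles at the degree-three vertices, sitting on an even backbone cycle), whereas $\chi(L(D_d))=d+1\ge 4$ by Lemma~\ref{lem:chrInd_Dragon}; hence the image of each source copy of $L(D_d)$ must meet a target dragon. Together with connectivity and the fact that $L(D_d)$ is a core (Lemma~\ref{lem:core}), this forces $f$ to map each source dragon isomorphically onto a target dragon. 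Pinning down this rigidity --- in particular arguing that the image of a source dragon cannot leak out of a single target dragon through its cut node --- is the step I expect to be the main obstacle.

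Once the dragons are rigid, the cut node joining each dragon to its $c$-vertex is preserved, so $f$ sends connector triangles to connector triangles and, through their arm nodes, sunlet-vertex triangles to sunlet-vertex triangles. Reading off the action on the sunlet vertices yields a homomorphism $g\colon S_n\to S_{n'}$, and since the rigid dragon hanging from each arm must be preserved, no two arms meeting at a common vertex can be identified; thus $g$ is locally injective. It remains to analyse locally injective self-maps of sunlets: local injectivity sends the degree-three cycle vertices to cycle vertices and then each pendant to the pendant at its image, so $g$ restricts to a locally injective homomorphism $C_n\to C_{n'}$. As both cycles are $2$-regular this restriction is a covering map, and a connected covering of $C_{n'}$ is precisely $C_{kn'}$; hence $n=kn'$, i.e. $n'\mid n$. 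That local injectivity is indispensable is not a formality: plain homomorphisms between cycles, for instance $C_5\to C_3$, ignore divisibility, so the whole argument hinges on the rigidity supplied by the dragons.
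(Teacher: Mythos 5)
Your skeleton for the hard direction matches the paper's plan: force each source copy of $L(D_d)$ onto a target copy isomorphically, deduce that the special (cut) nodes and then the connecting triangles are preserved, and finish by reading off a locally injective map of sunlets, hence a covering of cycles, hence divisibility (your covering-map finish and your easy direction are both fine, and slightly more explicit than the paper's). But the step you yourself flag as ``the main obstacle'' --- that the image of a source dragon cannot leak out of a single target dragon --- is a genuine gap, and it is exactly where the paper's real technical work lies. The tools you line up do not close it: your chromatic argument (the dragon-free part is $3$-colorable, since it is the line graph of a bipartite subcubic graph) only shows the image \emph{meets} some target dragon; connectivity only says the image is a connected subgraph; and the core property of Lemma~\ref{lem:core} only excludes homomorphisms of $L(D_d)$ into \emph{proper subgraphs of $L(D_d)$ itself} --- it says nothing about homomorphisms into the strictly larger connected subgraph consisting of a target dragon together with its special node and what lies beyond. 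This larger subgraph is a real threat: the special node forms a triangle with the two dragon nodes coming from the edges at the dragon's degree-$2$ vertex, so ``dragon plus cut node'' is a bigger non-$3$-colorable graph into which the source dragon could a priori map without covering the dragon isomorphically.

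The paper closes this gap with two separate arguments, and you would need both. For $d\ge 4$: every node of $L(D_d)$ lies in a $d$-clique, homomorphisms carry $d$-cliques to $d$-cliques, and for $d\ge 4$ a $K_d$ inside a line graph can arise only from a star at a vertex of degree at least $d$; hence all $d$-cliques of $L(G_{n',d})$ lie inside dragon line graphs (in particular the special node lies in none), so the image of every node of the source dragon, each of which sits in a $d$-clique, is trapped inside a single target dragon --- only then does the core property yield an isomorphism. For $d=3$ this clique argument genuinely fails (the special node \emph{does} lie in a triangle, since triangles in line graphs also arise from triangles in the base graph), and the paper instead invokes the classification that the only non-$3$-colorable $7$-node subgraphs of $L(G_{n',3})$ are the dragon line graphs, so the ($\le 7$-node, non-$3$-colorable) image of $L(D_3)$ must be exactly one of them. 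Without the clique-preservation argument for $d\ge 4$ and a separate $d=3$ analysis of this kind, the rigidity on which your whole converse direction rests remains unproven.
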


In one direction the proposition is trivial.  If $n$ is divisible by $n'$ then the 
homomorphism is given by a homomorphism from $S_n$ to $S_{n'}$ that cyclically wraps 
the bigger cycle around the smaller cycle.
We call this homomorphism \emph{cyclic}.

The other implication is a consequence of the following two lemmas.

The nodes of $L(G_{n,d})$ corresponding to the edges connecting the 
dragons with the vertices $c$ are called {\em special}.
In Figure~\ref{fig:circle} they are highlighted by circles.

\begin{lemma} 
\label{lem:special}
For $d\geq 3$, $n\geq 3$, $n'\geq 3$ every homomorphism $f:L(G_{n,d})\to L(G_{n',d})$ must map special vertices to special vertices.
\end{lemma}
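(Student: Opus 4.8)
The plan is to show that $f$ maps each dragon of the source into a single dragon of the target isomorphically, and then to pin down the special node by a purely local adjacency condition. Write $\Delta$ for one of the $2n$ copies of $D_d$ inside $G_{n,d}$ and $L(\Delta)\subseteq L(G_{n,d})$ for its line graph. The special node incident to $\Delta$ is the edge $s=\{c,d+2\}$; its neighbours in $L(G_{n,d})$ are the two \emph{path edges} $p_1=\{1,d+2\}$, $p_2=\{d+1,d+2\}$ of $\Delta$ (incident to the degree-$2$ vertex $d+2$), together with the two edges $\{a,c\},\{b,c\}$ leaving towards the sunlet. The key observation is that $p_1,p_2$ are the only nodes of $L(\Delta)$ having a neighbour outside $L(\Delta)$, since the only non-dragon edge meeting a dragon vertex is $s$ itself; thus $L(\Delta)$ is attached to the rest of $L(G_{n,d})$ solely through $s$.

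First I would use that a homomorphism of simple graphs maps every clique injectively onto a clique of the same size: two adjacent vertices cannot have equal images, as an edge must map to an edge. Hence each $K_d$ of $L(G_{n,d})$ maps onto a $K_d$ of $L(G_{n',d})$. For $d\ge 4$ the only $K_d$'s are the stars around the degree-$d$ dragon vertices $1,\dots,d+1$, because any $d\ge 4$ pairwise-intersecting edges share a common vertex. The $d+1$ stars $Q_1,\dots,Q_{d+1}$ of $\Delta$ pairwise intersect except for the pair $(Q_1,Q_{d+1})$ (the missing edge $\{1,d+1\}$), so their intersection graph is connected; since $f$ sends a shared node to a shared node, and a shared node lies in a unique target dragon, all the images $f(Q_i)$ lie in one target dragon $\Delta'$. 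Every node of $L(\Delta)$ lies in some $Q_i$, so $f(L(\Delta))\subseteq L(\Delta')$, and because $L(D_d)$ is a core (Lemma~\ref{lem:core}) the induced homomorphism $L(\Delta)\to L(\Delta')$ is in fact an isomorphism.

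To finish, I would identify the path edges intrinsically: inside $L(D_d)$ a clique edge has degree $2d-2$ while a path edge has degree $d$, so $p_1,p_2$ are exactly the two vertices of minimum degree, and the isomorphism above carries $\{p_1,p_2\}$ onto the path-edge pair $\{p_1',p_2'\}$ of $\Delta'$. It then remains to check that in $L(G_{n',d})$ the target special node $s'=\{c',(d+2)'\}$ is the \emph{unique} common neighbour of $p_1'$ and $p_2'$: a common neighbour must be an edge meeting both $\{1',(d+2)'\}$ and $\{(d+1)',(d+2)'\}$, and the only possibilities are an edge through $(d+2)'$, which forces $s'$, or the edge $\{1',(d+1)'\}$, which does not exist in the dragon. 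Since $f(s)$ is adjacent to $f(p_1)=p_1'$ and $f(p_2)=p_2'$, this yields $f(s)=s'$, a special node.

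The main obstacle is the step confining the image of $\Delta$ to a single target dragon, and the genuinely delicate case is $d=3$. There the sunlet cycle vertices also have degree $3$, so the line graph contains $K_3$'s (the stars around those vertices) lying outside every dragon, and the clique-location argument no longer pins the images $f(Q_i)$ inside a dragon directly. I expect to supplement it using chromatic number: $L(D_3)$ is not $3$-colourable (Lemma~\ref{lem:chrInd_Dragon}), whereas the region of $L(G_{n',d})$ joining two distinct dragons is $3$-colourable, so a copy of $L(D_3)$ cannot spread across several target dragons. Once the image is trapped in one target dragon, the core property and the common-neighbour computation conclude exactly as above.
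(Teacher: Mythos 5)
Your proposal follows essentially the same route as the paper: confine each source dragon's line graph into a single target dragon (via $d$-cliques for $d\ge 4$, via non-3-colorability for $d=3$), invoke the core property of $L(D_d)$ from Lemma~\ref{lem:core} to upgrade this to an isomorphism, and then identify the image of the special node as the unique common neighbour of the two path edges --- which is exactly the paper's ``unique triangle completion'' step. Your additional details (connectivity of the intersection pattern of the $d+1$ stars to force a single target dragon, and the degree count $d$ versus $2d-2$ pinning down the path edges) flesh out steps the paper asserts without proof, while your $d=3$ colourability claim, like the paper's, rests on an omitted case analysis.
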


\begin{proof}
When $d\ge 4$, then the line graph of any dragon $L(D_d)$ induced in $L(G_{n,d})$ contains a $d$-clique, which can only be mapped by $f$ onto a $d$-clique of $L(G_{n',d})$ --- indeed every vertex of $L(D_d)$ is incident with a $d$-clique. As the only $d$-cliques in both graphs are inside the line graphs of dragons, and all dragons are cores by Lemma~\ref{lem:core}, every dragon $L(D_d)$ maps by $f$ onto an isomorphic dragon in the target $L(G_{n',d})$.

For $d=3$ observe first that the only non 3-colorable subgraphs 
on 7 vertices of $L(G_{n',d})$ are the line graphs of dragons $D_d$. (This might be verified by a straightforward case analysis that we omit here.)
If we would assume for the contrary that $f(L(D_d))$ is not isomorphic to a $L(D_d)$ for some 
dragon $D_d$ in $L(G_{n,d})$, then the 3-coloring of $f(L(D_d))$ would yield a 3-coloring of 
$L(D_d)$, a contradiction with the property that $D_3$ is a core.

Because line graphs of dragons are cores, for any special node $u$ it holds that its two neighbors $v$ and $w$ in the associated line graph of a dragon $L(D_d)$ in $L(G_{n,d})$ must be mapped into some line graph of a dragon $L(D_d)$ from $L(G_{n',d})$. Moreover the images of $v$ and $w$ must be the two neighbors of the attached special node. (See Figure~\ref{fig:gadget}.)

Since $u,v$ and $w$ form a triangle, the only way to complete a triangle containing 
$f(v)$, $f(w)$ is to map $u$ to the adjacent special node, as such triangle cannot be completed inside the dragon.
\end{proof}

A triangle in $L(G_{n,d})$ is called a {\em connecting triangle} if it originates from a original node of degree three in $S_n$.
In Figure~\ref{fig:circle} the connecting triangles are denoted by dashed lines.

\begin{lemma}
\label{lem:triangles}
For every $d\geq 3$, $n\geq 3$ and $n'\geq 3$, every homomorphism $f:L(G_{n,d})\to L(G_{n',d})$ must map connecting triangles to connecting triangles.
\end{lemma}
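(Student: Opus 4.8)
The plan is to combine Lemma~\ref{lem:special} with a purely local characterization of connecting triangles in terms of adjacency to special nodes. First I would describe, inside a single indicator $I_d(a,b)$, which nodes of $L(G_{n,d})$ are adjacent to its special node. Write $z$ for the degree-two vertex of the dragon $D_d$ and $u,w$ for its two neighbours in $D_d$ (the endpoints of the edge of $K_{d+1}$ that was subdivided); the special node is then $\{c,z\}$. In $G_{n,d}$ the vertex $c$ has neighbours $a,b,z$ and the vertex $z$ has neighbours $c,u,w$, so the nodes adjacent to $\{c,z\}$ are exactly the two attachment edges $\{a,c\},\{b,c\}$ and the two dragon edges $\{u,z\},\{w,z\}$. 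I will call a node \emph{special-adjacent} if it is adjacent to some special node, so the special-adjacent nodes are precisely these attachment edges and these two dragon edges, taken over all indicators.

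Next I would prove the characterization: a triangle of $L(G_{n,d})$ has all three of its nodes special-adjacent if and only if it is a connecting triangle. The forward direction is immediate, since the connecting triangle at a cycle vertex $v_i$ consists of the three attachment edges joining $v_i$ to the $c$-vertices of the three indicators sitting on the sunlet edges at $v_i$, and each of these is adjacent to the special node of its own indicator. For the converse I would enumerate all triangles of the line graph. As $L(G_{n,d})$ is a line graph, each triangle is either a star about a vertex of degree at least three or the image of a triangle of $G_{n,d}$; the vertices of degree at least three are the cycle vertices, the $c$-vertices, the $z$-vertices and the degree-$d$ dragon vertices, while the only triangles of $G_{n,d}$ lie inside the dragons. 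Checking the remaining types, the star at $c$, namely $\{a,c\},\{b,c\},\{c,z\}$, and the star at $z$, namely $\{c,z\},\{u,z\},\{w,z\}$, each contain the special node $\{c,z\}$, which is not itself special-adjacent, hence have only two special-adjacent nodes; a star about a degree-$d$ dragon vertex uses at most one of $\{u,z\},\{w,z\}$ and no attachment edge, hence has at most one; and a triangle lying inside a dragon avoids $z$ entirely, since $z$ has degree two there and its neighbours $u,w$ are non-adjacent, hence has none. Therefore the connecting triangles are exactly the triangles with all three nodes special-adjacent.

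Granting the characterization, the lemma is short. Let $T=\{e_1,e_2,e_3\}$ be a connecting triangle of $L(G_{n,d})$, and for each $j$ let $s_j$ be a special node adjacent to $e_j$. Because $f$ is a homomorphism, $f(e_1),f(e_2),f(e_3)$ are pairwise adjacent, hence pairwise distinct, so $f(T)$ is again a triangle; moreover $f(e_j)$ is adjacent to $f(s_j)$, which is special by Lemma~\ref{lem:special}, so every node of $f(T)$ is special-adjacent. By the characterization, $f(T)$ is a connecting triangle, as claimed.

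I expect the converse direction of the characterization to be the main obstacle. The delicate point is not the stars within one indicator but ruling out triangles whose nodes come from several indicators: two special-adjacent nodes from distinct indicators are mutually adjacent only when they are attachment edges sharing a sunlet vertex, and the two dragon edges $\{u,z\},\{w,z\}$ can never be completed to a triangle all of whose nodes are special-adjacent. Making this precise forces any triangle whose three nodes are all special-adjacent to be a star of three attachment edges at a common sunlet vertex of degree three, which is exactly a connecting triangle.
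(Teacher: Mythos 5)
Your proof is correct and follows essentially the same approach as the paper: both reduce the lemma to the observation that connecting triangles are exactly the triangles all of whose nodes are adjacent to special nodes, and then invoke Lemma~\ref{lem:special} to see that this property is preserved by $f$. The only difference is that the paper simply asserts this characterization of connecting triangles, whereas you verify it by a complete (and correct) enumeration of the triangles of $L(G_{n,d})$.
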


\begin{proof}
Consider an arbitrary connecting triangle of $G_{n,d}$. It has the property that each of its node is adjacent to a special node. By Lemma~\ref{lem:special}, special nodes are preserved by the homomorphism $f$.
The only triangles with this property in $G_{n',d}$ are precisely the connecting triangles.
\end{proof}

\begin{proof}[Proof of Proposition \ref{prop:homo}]
By Lemma~\ref{lem:triangles}, the connecting triangles of $L(G_{n,d})$ map to the connecting
triangles of $L(G_{n',d})$. These special triangles are joined by triangles with special nodes onto a cycle of triangles. By Lemma~\ref{lem:special} these triangles with special nodes can not
map onto connecting triangles.  
Consequently, the image of two adjacent connecting triangles uniquely 
determines the image of the entire cycle: for 
the next adjacent connecting triangle in $L(G_{n,d})$, 
only one connecting triangle in $L(G_{n',d})$ is available for its image
as the adjacency of connecting triangles must be preserved. 
(See Figure~\ref{fig:circle} for an illustration.)
Therefore the homomorphism $f$ is cyclic.
\end{proof}

\begin{proof}[Proof of Theorem~\ref{thm:main}]
We apply Theorem~\ref{thm:universal} and show an embedding of $(P,\leq_P)$ 
into the homomorphism order of the interval $[K_d,K_{d+1})_{\mathcal L}$.
For the chosen $d\geq 3$ we assign every $A\in P$ a line graph $L(d,A)$ consisting of the disjoint union of graphs $L(G_{3a,d}), a\in A$.
Since any homomorphism must map connected components to connected components, 
and also as $3a|3b$ if and only if $a|b$,
we know by Proposition~\ref{prop:homo}
that $L(d,A)$ allows a homomorphism to $L(d,B)$ if and only if $A\leq_P B$.
\end{proof}

\section{Concluding remarks}
\label{sec:concluding}

Our results confirm that the homomorphism order of line graphs is rich.  It is interesting that our embedding
considerably differs from the one used in the proof of the universality of
oriented paths by Hubi\v{c}ka and Ne\v{s}et\v{r}il~\cite{Hubicka2005}.

Our construction is based on the retrospecting of a homomorphism $f:L(G)\to L(H)$
to a vertex mapping $f':V_G\to V_H$. We put $f'(v)=v'$ if
all edges incident to $v$ are mapped by $f$ to the edges 
incident to $v'$ in $H$. This mapping is not always well defined. In particular:
\begin{itemize}
\item[(a)] The image of an edge incident to a vertex of degree $1$ of $G$ is contained
in the set of edges incident to two different vertices $u$, $v$ connected by an edge in $H$. In this sense $f'$ is not a function.
\item[(b)] Edges incident to a vertex $v$ of degree $3$ in $G$ correspond to a triangle
in $L(G)$. Because the line graph of a triangle is also a triangle, the image of these edges
may thus map to a line graph of a triangle. In this case $f'(v)$ is not defined.
\end{itemize}
The basic idea behind the proof of Lemma~\ref{lem:core} is the fact that $f'$ (if
it is a function) is close to a graph homomorphism $G\to H$ with two
main differences:
\begin{itemize}
\item[(c)] It may happen that $f'(u)=f'(v)$ for two adjacent vertices of $G$.
\item[(d)] For vertices of degree at least $3$ the mapping $f'$ is locally injective with the exception of $(c)$.
\end{itemize}
A homomorphism $h:G\to H$ is {\em locally injective} if the restriction of the
mapping $h$ to the domain consisting of the vertex neighborhood of $v$ and
range consisting of the vertex neighborhood of $h(v)$ is injective.  In one
direction, every locally injective homomorphism $h:G\to H$ yields a
homomorphism $h':L(G)\to L(H)$. Our observations above show that this direction can be reversed in special cases.

It is thus not a surprise that our universality proof is based on ideas
developed for the proof of universality of locally injective homomorphisms.  We
get closer to graph homomorphisms by means of the indicator construction.  In
the proof of Proposition~\ref{prop:homo} we consider a mapping $f'':V_G\to V_H$
that retrospects a homomorphism $L(G*I_d(a,b))\to L(H*I_d(a,b))$ in a similar way
as $f'$. This mapping is a locally injective homomorphism with the exception of
$(b)$ and vertices of degree 2, where the local injecitivity is not enforced.
For this reason we use sunlets instead of cycles and our
embedding of the divisibility partial order is based on the fact that there is a
locally injective homomorphism from sunlet graphs $S_n$ to $S_m$ if and only if
$m$ divides $n$.

With this insight it is not difficult to see that our construction can be
altered to form $3$-regular graphs. This can be done by adding a cycle
consisting of all of the pendant vertices to every sunlet.  For degrees $d>3$, the edges
connecting both inner and outer can be turned into a multi-edges of a given
degree, but also the indicator needs to be modified to become $d$-regular
except for the two vertices of degree 1.  By replacing the edge connecting the
dragon with vertex $c$ by a clique of the corresponding degree and by 
adding a separate copy of a dragon to all but one vertex (the one connected to the base).

The locally injective homomorphism order is the main subject of several groups of authors~\cite{Fiala2005, Fiala}, see also a survey~\cite{Fiala2008}.
It is shown that several properties of the locally injective homomorphism order
are given by degree refinement matrices of the graphs considered. 
For a graph $G$, 
the degree refinement (also known as the equitable partition) is the coarsest 
partition of the vertex set of $G$ into classes $B_1,\dots,B_l$ s.t. vertices in the same class cannot be distinguished by counting their neighbors in these classes, i.e. for any $i$, $j$
and any $u,v\in B_i$ it holds that $|N(u)\cap B_j|=|N(v)\cap B_j|=d_{i,j}$.
When blocks $B_i$ are ordered in a canonical way, say by promoting blocks with vertices of high degree first (see \cite{Fiala, Fiala2005} for details) one 
may arrange these parameters $d_{i,j}$ into the unique {\em degree refinement matrix} $D$ of the graph $G$.
As a special case, for $d$-regular graphs $G$ the degree refinement matrix
is trivial consisting of only one value, i.e. $D=(d)$.

Roberson \cite{Roberson} shows that the homomorphism order line graphs is dense
above every complete graph $K_n$, $n\geq 2$.  Our construction gives many extra
pairs with infinitely many different graphs strictly in between. Further such pairs can be obtained
by an application of our other result~\cite{Fiala}:
\begin{thm}[\cite{Fiala}]
\label{thm:density}
Let $G$ and $H$ be connected graphs with different degree refinement matrices, there exists a locally injective homomorphism $h:G\to H$ and $H$ has no vertices of degree 1. Then:
\begin{enumerate}
\item[(a)] There exists a connected graph $F$ that is strictly in between $G$ and $H$ in the locally injective homomorphism order.
\item[(b)] When $G$ has no vertices of degree~1 and $H$ has at least one cycle with a vertex of degree greater than 2, then $F$ can be constructed to have no vertices of degree 1 and contain a cycle with a vertex of degree greater than~2.
\end{enumerate}
\end{thm}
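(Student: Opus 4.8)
The plan is to treat the degree refinement matrix (equivalently, the universal cover obtained by unfolding the graph into an infinite tree) as the controlling invariant and to interpolate between $G$ and $H$ with it. The key fact I would use is that a locally injective homomorphism $h\colon A\to H$ is monotone for this invariant: local injectivity forces $\deg_A(v)\le \deg_H(h(v))$ for every $v$, and tracking the block-degrees $d_{i,j}$ of the equitable partitions shows that the universal cover of $A$ embeds into that of $H$. Consequently, two graphs that are equivalent in the locally injective homomorphism order (with loc.\ inj.\ homomorphisms in both directions) must share the same universal cover and hence the same degree refinement matrix. Since $G$ and $H$ have \emph{different} matrices by hypothesis while $G\to H$, they are strictly comparable, and it suffices to build a connected $F$ with $G\to F\to H$ whose universal cover is genuinely intermediate; such an $F$ is then automatically inequivalent to both endpoints.

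First I would pin down \emph{how} the matrices differ. Fixing the loc.\ inj.\ homomorphism $h$ and comparing the equitable partitions of $G$ and $H$, one locates a witnessing vertex (or block) at which $h$ fails to be locally bijective, i.e.\ where $H$ carries strictly more local multiplicity than $G$. This slack is precisely the room that lets a third graph exist. Then I would construct $F$ by a controlled local surgery on $G$ that consumes only \emph{part} of this slack: at the witnessing vertices I attach copies of a gadget cut from $H$ (a neighborhood ball of $H$, or a finite piece of a cover of $H$) so that the local degrees of $F$ move strictly toward those prescribed by $H$ without reaching them everywhere. The gadget is chosen so that the inclusion $G\hookrightarrow F$ is locally injective (a $G$-vertex simply gains extra neighbors from the gadgets, which does not destroy injectivity on its original neighborhood), giving $G\to F$, and so that $F\to H$ by sending the $G$-part via $h$ and each attached gadget back into $H$ via the covering or inclusion it was cut from. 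Attaching along existing vertices rather than disjointly keeps $F$ connected.

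For part (b) the same scheme applies, but I would choose the gadget and the attaching mechanism to avoid introducing any vertex of degree $1$. Since $G$ already has minimum degree at least $2$ and $H$ contains a cycle through a vertex of degree greater than $2$, I would route the interpolation through that cycle, using a subdivided or doubled copy of it as the gadget. This simultaneously guarantees that $F$ inherits a cycle with a vertex of degree greater than $2$ and that every vertex of $F$ has degree at least $2$, so $F$ meets the stronger structural requirements.

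The main obstacle is the strictness, namely proving $F\not\to G$ and $H\not\to F$; both reduce to showing that the universal cover of $F$ is properly intermediate. It must properly contain the tree of $G$ (so no loc.\ inj.\ homomorphism can fold $F$ back into $G$, since that would require some $\deg_F(v)$ to fit inside $G$, which the surgery has violated) yet still fail to contain the tree of $H$ (so $H$ cannot map into $F$, as only part of the slack was consumed). I would verify this by following infinite rooted branches of the universal covers and comparing the block-degree sequences $d_{i,j}$: the surgery alters these sequences at the witnessing block in a way that is visible in the tree and cannot be undone by any locally injective folding. Making this last step rigorous — showing the modification is detectable by the degree refinement matrix and not absorbed by the combinatorics of covers — is where the real work lies.
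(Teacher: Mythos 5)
First, a point about the comparison you asked for: this paper never proves Theorem~\ref{thm:density} at all --- it is imported, with citation, from the companion paper~\cite{Fiala}, so there is no in-paper proof to measure your attempt against; I can only judge it on its own terms. Your general framework is the right one for this theory: locally injective homomorphisms are monotone with respect to the degree refinement matrix, locally-injectively equivalent connected graphs must have equal matrices, and (given $G\to F\to H$) strictness therefore reduces to showing $\drm(F)\neq\drm(G)$ and $\drm(F)\neq\drm(H)$. The background facts you invoke (monotonicity, and that a locally injective homomorphism between connected graphs with the same matrix is locally bijective) are standard results of Fiala and Kratochv\'{\i}l (see the survey~\cite{Fiala2008}), so relying on them is legitimate.

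However, the attempt has genuine gaps, and the telltale sign is that neither structural hypothesis of the theorem is ever used. The step you defer as ``where the real work lies'' is the entire content of the statement, and your sketch of it is partly wrong: $F\not\to G$ does not follow because ``some $\deg_F(v)$ would have to fit inside $G$'' ($G$ may have vertices of large degree elsewhere); it follows because the inclusion $G\subsetneq F$ is an \emph{injective} locally injective map, so $\drm(F)=\drm(G)$ would force it to be locally bijective, and the image of a locally bijective map into a connected graph is everything, contradicting $G\neq F$. On the other side, the unused hypothesis that $H$ has \emph{no vertices of degree one} is precisely what yields $H\not\to F$ in part (a): with the simplest gadget --- a single pendant edge at a witnessing vertex $v$ (one with $\deg_G(v)<\deg_H(h(v))$, which exists exactly because $\drm(G)\neq\drm(H)$), pointing toward a neighbour of $h(v)$ outside $h(N_G(v))$ --- the graph $F$ has a leaf while $H$ has none, so $\drm(F)\neq\drm(H)$ and equivalence with $H$ is impossible. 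Your ball gadget even fails local injectivity at the attachment unless it is cut so that the copy of $h(v)$ adjacent to $y$ is deleted (otherwise two neighbours of $y$ map to $h(v)$). For part (b) the gaps are worse: a ball cut from $H$ has boundary vertices of degree one, violating (b) outright, and a \emph{subdivided} cycle of $H$ need not admit any homomorphism into $H$ at all (subdivision changes length and parity, and it can no longer wrap around $C$). Your ``doubled'' cycle is the right kind of gadget, but the real difficulty is the attachment: the gadget must sit at a witnessing vertex $v$, whose image $h(v)$ may be arbitrarily far from $C$, so one needs a connecting path from $v$ that maps onto a non-backtracking walk in $H$ from a free neighbour of $h(v)$ to the degree-greater-than-$2$ vertex $w$ of $C$, arriving at $w$ along an edge not on $C$ so that local injectivity holds at the junction --- this is exactly where hypothesis (b) on $H$ enters, and ``routing the interpolation through that cycle'' without such a path is not an available move. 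Finally, in the leafless setting you still owe an argument that the constructed $F\to H$ is not locally bijective (else $\drm(F)=\drm(H)$ is not excluded), e.g.\ by forcing a degree-two path vertex to map onto a vertex of degree at least three; ``consume only part of the slack'' does not suffice when the slack at $v$ is exactly one unit.
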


Fix $d>2$.  Assume that $G$ and $H$ have no vertex of degree 2 and the degree of
vertices is either bounded by $d-1$, or bounded by $d$ and moreover that $G$ and $H$ are in Vizing class 1. Then graph $F$ given by Theorem~\ref{thm:density} can be easily extended to $F'$, where an extra edge is added to every vertex of degree $2$. It is also easy to see that it cannot have vertices of degree greater than $d$. From the proof of Theorem~\ref{thm:density} it follows that $F$ is in Vizing class 1.

Consequently, $L(F'*I_d(a,b))$ is strictly in between $L(G*I_d(a,b))$  and
$L(H*I_d(a,b))$. Afterwards, a graph strictly in between $G*I_d(a,b)$ and $F'*I_d(a,b)$
can be again constructed by applying Theorem~\ref{thm:density} on $G$ and $F$.
This construction provides new examples of dense pairs in the homomorphism order of
line graphs.

There are more results about the locally injective homomorphism order that seem
to suggest a strategy to attack problems about the homomorphism order of line graphs.
It appears likely that the characterization of gaps in the locally injective homomorphism
order will give new gaps in the homomorphism order of line graphs.
The proof of universality of the locally injective order of connected graphs in \cite{Fiala}
can be translated to yet another proof of the universality of homomorphism order of line graphs,
this time however the graphs used are connected but not $d$-regular --- since locally
injective homomorphism order is not universal on $d$-regular connected graphs.
This suggests the question of whether the homomorphism order of line graphs is universal on the class of finite connected $d$-regular graphs.

Finally, Leighton's construction of a common covering for graphs \cite{Leighton1982}
may give an insight into a way of constructing a suitable product for line graphs.

\section{Acknowledgment}
We would like to thank to Jaroslav Ne\v set\v ril and to the anonymous referees
for remarks that improved quality of this paper.

\bibliographystyle{plain}
\bibliography{linegraphs}

\end{document}